\newtheorem{theorem}{Theorem}[section]
\newtheorem{lemma}[theorem]{Lemma}
\newtheorem{prop}[theorem]{Proposition}
\def\cB{{\mathcal B}}
\def\cT{{\mathcal T}}
\def\cC{{\mathcal C}}
\newcommand{\Path}{{P}}
\newcommand{\mfS}{\mathfrak{S}}
\newcommand{\Ldelta}{L\left(\delta',\delta''\right)}
\newcommand{\branch}{\mathcal{B}}
\newcommand{\braidSymbol}{\mathbf{B}}
\newcommand{\piLeft}{\pi^{-1}_{-}}
\newcommand{\piRight}{\pi^{-1}_{+}}
\theoremstyle{definition}
\newtheorem{definition}[theorem]{Definition}
\newtheorem{example}[theorem]{Example}
\theoremstyle{remark}
\newtheorem{remark}[theorem]{Remark}
\DeclareMathOperator{\Cross}{Cross}
\begin{document}

\title{Numerical computation of braid groups}

\author[Rodriguez]{Jose Israel Rodriguez}
\address{
Jose Israel Rodriguez\\
The University of Chicago\\
Dept. of Statistics\\
5734 S. University Ave.
Chicago, IL 60637}
\email{joisro@Uchicago.edu}
\urladdr{http://home.uchicago.edu/~joisro}

\author[Wang]{Botong Wang}
\address{
Botong Wang\\
University of Wisconsin-Madison\\
 Department of Mathematics\\
Van Vleck Hall, 480 Lincoln Drive, 
Madison, WI}
\email{wang@math.wisc.edu}
\urladdr{http://www.math.wisc.edu/~wang/}

\begin{abstract}
In this article, we give a numerical algorithm to compute braid
groups of curves, hyperplane arrangements, and parameterized system of polynomial equations.  
Our main result is an algorithm that determines the cross-locus and the generators of the braid group. 
\end{abstract}

\maketitle
\section{Introduction}
Braid groups were first introduced by Emil Artin in 1925 as a generalization of the symmetric group. Consider a continuous permutation of $n$ points in the plane. The symmetric group $\mfS_n$ reflects the initial and final state of the permutation. 
In contrast, the braid group $\braidSymbol_n$ keeps track of the continuous paths of the points, as they permute. There is a natural surjective map from $\braidSymbol_n$ to $\mfS_n$. When $n\geq 2$, $\braidSymbol_n$ is an infinite group and when $n\geq 3$, $B_n$ contains a subgroup, which is free with two generators. 

In the past two decades, braid groups have been applied
in nuclear physics \cite{NW1996}, 
%
robotics \cite{Kur2012}, 
%
cryptography \cite{LLH2001,KLC2000,Garber2010,Dehornoy2004,CKLS2001}.
%
%
%
In this article, we will consider braid groups of algebraic varieties defined by systems of polynomial equations.
In this setting, we have braid groups that are finitely generated subgroups of the braid group on $n$  strands.

The first case we consider is an algebraic curve $\mathcal{C}$ in $\mathbb{C}_{z}^{1}\times\mathbb{C}_{t}^{1}$ 
that maps dominantly to $\mathbb{C}_{t}^{1}$  and a generic dehomogenization of the curve $\mathbb{\hat C}$ in 
$\mathbb{P}_{z}^{1}\times\mathbb{P}_{t}^{1}$.
The fiber over a point in $\mathbb{C}_{t}^{1}$, consists of finitely
many $z$-coordinates and is constant over the complement of the branch locus denoted by $\cB$.
The second case we consider is a line arrange arrangement and third case is for parameterized polynomial systems.

The Galois group can
be determined using numerical homotopy continuation: for each point
in the branch locus, the fiber over a general point is tracked over
a loop encircling that branch point  and recording the associated permutation
yields a generator of the Galois group. Doing this for each branch point produces a set of generators for the the Galois
group, which is a subgroup of the symmetric group on the number of elements
in the general fiber. 
This method has been used in \cite{Po07,GP11} for curves and for higher dimensional varieties in \cite{LS09,MNMH,HRS17}

Here, we will compute the braid group $\braidSymbol$ which has more refined information than the Galois group.
Given a general loop $\gamma$ encircling a branch point, the restriction of $\mathcal{C}$ in $\mathbb{C}_Z^1\times \gamma$ becomes a set of braids in a real 3-dimensional space. 
The braid group not only keeps track of the monodromy of solutions, it also keeps track of the configuration of the braids. 

In this article, our main results are numerical algorithms 
 to compute the braid group of a curve. Our algorithms have three main steps. 
The first step is to parse the branch locus of the
curve; 
the second step computes, for each branch point, the cross-locus of a loop; 
and the third step braid-tracks around each loop. 
If each cross-locus of the fiber is proper, then our algorithm outputs the braid group. {We also give a more effective algorithm when the curve is a union of lines. Since the braid group determines other topological invariants of the compliment of the curve (e.g. the Alexander polynomial by \cite{Lib}). We hope our algorithm can also be used to study the topology of algebraic curves and their complements. }

Our paper is structured as follows. In Section \ref{sec:prelim} we recall the abstract braid group, homotopy continuation, and the branch locus. 
In Section \ref{sec:twist} we define the cross-locus over a line segment and use it to develop an algorithm to compute the braid group.
In Section \ref{sec:illustrate} we have illustrating examples and 
in Section \ref{sec:hyper} we consider the braid groups of line arrangements. 
The monodromy of these arrangements is trivial but the braid group is very interesting as seen in Example \ref{ex:hyper8} and \ref{ex:hyper46}. 
In the latter example we compute a generating set consisting of $46$ elements which have a total of 2568 crossings. 
In Section \ref{sec:poly} we discuss how the methods in the previous sections generalize to higher dimensional varieties. 


Now, we introduce an example. 

\begin{example}\label{ex:intro}
Consider the irreducible curve ${\mathcal C}$ in $\mathbb{C}_{z}^{1}\times\mathbb{C}_{t}^{1}$
defined by $z^{3}-t^{2}=0$ 
The projection of this curve to $\mathbb{C}_{t}^{1}$
is three to one with a branch point at $t=0$. 
Consider the loop consisting of three line segments $\gamma:\delta_{1}\to\delta_{2}\to\delta_{3}\to\delta_{1}$
where $\delta_{j}=\exp\left(\frac{2\pi \sqrt{-1}}{3}j+.2\sqrt{-1}\right)$. 

The fiber over $\delta_{1}$ is $\left(z^{(1)},z^{(2)},z^{(3)}\right)$.
The loop $\gamma$ induces a braid by keeping track of how the real and imaginary
parts of the fiber $\left(z^{(1)}\left(\gamma\right),z^{(2)}\left(\gamma\right),z^{(3)}\left(\gamma\right)\right)$
interact as the fiber is tracked along $\gamma$. 

When the fiber has distinct real parts we order $\left(z^{(1)},z^{(2)},z^{(3)}\right)$
lexicographically. 
However, there are finitely many points in $\gamma$ whose
fiber does not have distinct real parts, i.e. there is a crossing. 
 How the fibers cross will determine the twist generated by the loop $\gamma$. 
The $t$ coordinates of the crossings are: 
$(-.527\sqrt{-1},
-.510	,
-.667\sqrt{-1},
.755)$.
As we track the fiber along the loop, whenever a fiber has two points with the same real parts, we will
swap there ordering.
 Depending on the imaginary component we give
this swap a sign and call it a crossing. 
More specifically, if $z^{(i)}$
and $z^{(i+1)}$ are swapped, then we give a crossing ${\sigma_{i}}$
when $\text{imag }z^{(i)}<\text{imag }z^{(i+1)}$, and we give a crossing ${\sigma_i^{-1}}$
when $\text{imag }z^{(i)}>\text{imag }z^{(i+1)}$. 
Our convention is to
order the points in the fiber with a crossing by the induced ordering of the incoming paths.
We see that the fiber over the $t$-coordinates 
$(-.527\sqrt{-1},
-.510	,
-.667\sqrt{-1},
.755)$ 
have crossings 
$({\sigma_{2}},
 {\sigma_{1}},
 {\sigma_{2}},
 {\sigma_{1}})$ 
 respectively.
Together, these four crossings make a sequence that encodes a twist ${\bf b}=
\left(
 {\sigma_{2}}
 {\sigma_{1}}
{\sigma_{2}}
 {\sigma_{1}}
\right)$.
Note that the twist acts on the fiber to induce a permutation
$\left[1,2,3\right]\to\left[2,3,1\right]$; this means the braid group
maps onto the monodromy group. 

\[
\arraycolsep=1.4pt\def\arraystretch{1.65}
\hspace{-8em}
\begin{array}{c|cccccc}
t & 
& 	\pi_{-1}(t)&  \\	
\hline
.527\sqrt{-1} 	&
-.652   &     .326-.56\sqrt{-1}   &     .326+.565\sqrt{-1}\\
-.510		&
-.319-.552\sqrt{-1}   &     -.319+.552\sqrt{-1}   &     .638		
\\
-.667\sqrt{-1}		&
-.763   &     .381-.661\sqrt{-1}   &     .381+.661\sqrt{-1}		
\\
.755		&
-.414-.718\sqrt{-1}   &     -.414+.718\sqrt{-1}   &     .829		
\end{array}
 \put(5,0){$
 \begin{array}{c}
 \text{Braid}\\
 \hline
\begin{tikzpicture}
\braid[number of strands=3,height=.25in] (braid) a_{2} a_{1} a_2 a_1 ;
\end{tikzpicture}
\end{array}$}
\]

\end{example}

\section{Preliminaries }\label{sec:prelim}

In this section we set notation for braid groups and recall homotopy
continuation.

\subsection{Abstract braid groups}
In this subsection we recall definitions of braid groups. 

The \emph{abstract braid group on $n$ strands}, denoted $\bf{B}_n$, has the following presentation: 
$$
\begin{array}{lll}
{\bf B}_n&:=
&\big\langle 
\sigma_1,\sigma_2,\dots,\sigma_{n-1} 
\quad
\text{ such that},
 \quad 
\sigma_{i}\sigma_{i+1}\sigma_i=\sigma_{i+1}\sigma_{i} \sigma_{i+1}, 
\quad
 \text{and }\quad
\sigma_{i}\sigma_{j}=\sigma_{j}\sigma_{i}
\big\rangle,
\end{array}
$$
where  $1\leq i\leq n-2$ in the first group of relations and  $|i-j|\geq 2$ in the second group of relations. 
We will consider subgroups of ${\bf B}_n$, by specifying a set of generators. 

\begin{example}\label{b3}
Consider the case where $n=3$. 
The following two elements generate ${\bf B}_3$ and to the right are their inverses.

$$
\begin{array}{cccccc}
\begin{array}{cc}
\sigma_1 \\
\hline
\centering{
\begin{tikzpicture}
\braid[number of strands=3,height=1in] (braid) a_{1} ;
\end{tikzpicture}
}
\end{array}
&
\begin{array}{cc}
\sigma_2 \\
\hline
\centering{
\begin{tikzpicture}
\braid[number of strands=3,height=1in] (braid) a_{2} ;
\end{tikzpicture}
}
\end{array}
&
%
\begin{array}{cc}
\sigma_1^{-1}\\
\hline
\centering{
\begin{tikzpicture}
\braid[number of strands=3,height=1in] (braid) a_{1}^{-1} ;
\end{tikzpicture}
}
\end{array}
&
\begin{array}{cc}
\sigma_2^{-1}\\
\hline
\centering{
\begin{tikzpicture}
\braid[number of strands=3,height=1in] (braid) a_{2}^{-1} ;
\end{tikzpicture}
}
\end{array}\\
\end{array}
$$
The braid group in Example \ref{ex:intro}
is generated by $\sigma_2\sigma_1\sigma_2\sigma_1$.
\end{example}

\subsection{Homotopy continuation and branch points}\label{ss:branchPoints}
In this subsection we describe homotopy continuation and branch points.

For us, a curve ${\cC}\subset\mathbb{C}_{z}^{1}\times\mathbb{C}_{t}^{1}$
will be defined by a polynomial $f\left(z,t\right)=0$ and we assume that the projection to 
$\mathbb{C}_t^1$ is dense.

\begin{definition}
The \emph{branch locus} $\branch$ is defined to be the following:
\begin{equation}\left\{
t : \text{there exists } z\text{ such that } f(z,t)=0,\frac{\partial f}{\partial z}=0
\right\}.
\end{equation}
\end{definition}

Let $\delta',\delta''$ denote distinct complex numbers in $\mathbb{C}_t^1$ and denote 
the \emph{line segment from $\delta'$ to $\delta''$}  by 
 ${\Ldelta}\subset\mathbb{C}_t^1$. 
Suppose that $\Ldelta$ does not intersect the branch locus $\branch$.
Then, the fiber of the curve $\mathcal{C}$ defined by $f(z,t)=0$ over $\Ldelta$ consists of 
$\deg\pi:\mathcal{C}\to\mathbb{C}_t^1$~paths. 
Using predictor-corrector methods we are able to track the start points of the paths to the end points of the paths using numerical methods \cite{li_1997,allgower2012numerical,allgower1993continuation}. 
Given the fiber over a point $p$ in $\Ldelta$, these methods use Euler's method to approximate the fiber over a point near $p$ in  $\Ldelta$ and Newton's method to refine the solutions. 
Together, this is called homotopy continuation. 
There exist off the shelf software \cite{Bertini,PHC,bertini4M2,BMonodromy} for these methods that use reliable heuristics to track the paths quickly. 

\subsection{Braid-tracking}\label{ss:braidTracking}

Suppose $\Path$ is a path in $\mathbb{C}_t^1$.
With standard homotopy continuation, we are interested in understanding the behavior of the fibers over $\Path$ as paths in a three-dimensional space. 
In \emph{braid-tracking}, we will be interested in how the paths 
intertwine with one another by considering them in an ambient $\mathbb{R}^2\times P$ rather than $\mathbb{C}^1_z\times \Path$.

To do braid-tracking along $\Path$, we assume that points in the path $\Path\subset\mathbb{C}_t^1$ 
have fibers with $n$ distinct points, i.e., $\Path\subset\mathbb{C}_t^1\setminus\branch$.
By identifying $\mathbb{C}_z^1$ with the canonical $\mathbb{R}^1_x\times\mathbb{R}^1_y$ by real and imaginary parts we will keep track of how the paths intertwine.
The points in the fiber are ordered lexicographically (by $x$-coordinates) from most negative real parts to most positive real parts. This is a well defined ordering when 
each point of the fiber has distinct $x$-coordinates.
A point $\rho\in\mathbb{C}_{t}^{1}\backslash{\cB}$ is said to
have a $\emph{crossing}$ whenever the real parts of $z^{(i)}$
in the fiber $\pi^{-1}\left(\rho\right)=\left\{ z^{\left(1\right)},z^{\left(2\right)},\dots,z^{\left(n\right)}\right\} $
are not distinct. 
We say that that the fiber is \emph{crossed}.

Let \emph{$\Cross_\pi(P)$} denote the subset of points in $\Path$ with crossings,
 and we say $\Cross_\pi(P)$ is the \emph{cross-locus of $\Path$.} 
We call a the fiber over a point in $\Cross_\pi(P)$ 
  \textit{proper}, 
  if given any real number $x$, at most two points in $\left\{ z^{\left(1\right)},z^{\left(2\right)},\dots,z^{\left(n\right)}\right\}$ have real part equal to $x$.

 We call the fiber over a point $p$ in $\Cross_\pi(p)$  \textit{transversal}, if it is proper and if the graphs of $\left\{ z^{\left(1\right)},z^{\left(2\right)},\dots,z^{\left(n\right)}\right\}$ as functions of $s$ intersect transversally at this fiber. 
In this case,  a well ordering of the fiber is induced by the path coming from the left. 
We denote this ordered fiber by $\piLeft(t)$.
We defined $\piRight(t)$ to be the the result of acting on $\piLeft(t)$ by the crossings.

In a transversal crossed fiber, an adjacent pair or multiple adjacent pairs of
elements can have the same real part. 
If the real parts of $z^{\left(i\right)}$
and $z^{\left(i+1\right)}$ are equal, then we return the crossing $\sigma_{i}$
when $\text{imag }z^{\left(i\right)}<\text{imag }z^{\left(i+1\right)}$
and we return the crossing ${\sigma_{i}^{-1}}$ when $\text{imag }z^{\left(i\right)}>\text{imag }z^{\left(i+1\right)}$. 

We are interested in understanding the behavior of crossed fibers
over points in a loop $\gamma$. 
We say a loop \emph{encircles}
the branch point $\tau$ in $\mathbb{C}_t^1$ if winding number of the loop about $\tau$ is one and the interior of the loop does not contain any points in $\cB\setminus\{\tau\}$.

\begin{theorem}
Let ${\cB}$ be the branch locus for the curve ${\cC}$, and
let $\beta\in\mathbb{C}_t^1\setminus\cB$ denote a base point. 
For $\tau\in{\cB}$, let $\gamma$ be a loop from $\beta$ encircling $\tau$.
Furthermore, let ${\bf b}_\tau$ be the twist associated to the loop
$\gamma$. 
Then, the braid group of ${\cC}$ is generated by
$\left\{ {\bf b}_{\tau}\mid\tau\in{\cB}\right\} $ as a subgroup
of the abstract braid group $\braidSymbol_{k}$, where $k$ is the
degree of the projection ${\cC}\to\mathbb{C}_{t}^{1}$. 
\end{theorem}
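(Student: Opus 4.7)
The plan is to identify the braid group of $\cC$ with the image of a monodromy representation
$$\rho\colon \pi_1\bigl(\mathbb{C}_t^1\setminus\cB,\ \beta\bigr)\longrightarrow \braidSymbol_k,\qquad k=\deg\bigl(\pi\colon \cC\to\mathbb{C}_t^1\bigr),$$
obtained from the braid-tracking construction of Section~\ref{ss:braidTracking}: any loop $\gamma$ based at $\beta$ in $\mathbb{C}_t^1\setminus\cB$ lifts via $\pi$ to $k$ disjoint paths in $\mathbb{R}^2_{x,y}\times\gamma$, and their mutual intertwining assigns a well-defined word in the generators $\sigma_i$. The theorem will follow once we establish (i) that $\rho$ is a well-defined group homomorphism, and (ii) that $\pi_1(\mathbb{C}_t^1\setminus\cB,\beta)$ is generated by loops encircling the points of $\cB$.

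First, I would show $\rho$ is well-defined on homotopy classes. After a generic perturbation, any path avoiding $\cB$ has proper transversal cross-locus, so the associated braid is well defined. Given two loops $\gamma_0,\gamma_1$ homotopic rel $\beta$ in $\mathbb{C}_t^1\setminus\cB$, I would invoke transversality to perturb a homotopy $H\colon [0,1]^2\to\mathbb{C}_t^1\setminus\cB$ so that at all but finitely many times $s\in[0,1]$ the loop $H(\cdot,s)$ has proper transversal cross-locus, and at the exceptional times exactly one codimension-one degeneration occurs. Each such degeneration corresponds to a Reidemeister-type move which preserves the isotopy class of the resulting geometric braid, hence preserves the element of $\braidSymbol_k$. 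That $\rho$ is a homomorphism is then immediate: concatenation of loops corresponds to vertical stacking of braids, which is the product in $\braidSymbol_k$ with the bottom-to-top convention used in Example~\ref{ex:intro}.

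Second, the topological input: $\mathbb{C}_t^1\setminus\cB$ is the complement in $\mathbb{C}$ of the finite set $\cB$, hence has fundamental group free on the loops $\{\gamma_\tau\}_{\tau\in\cB}$, where each $\gamma_\tau$ encircles exactly one branch point $\tau$ (and together with a tree of connecting arcs forms a bouquet deformation retract). Applying $\rho$ to a generator yields $\rho(\gamma_\tau)={\bf b}_\tau$ by the very definition of ${\bf b}_\tau$. Since $\rho$ is a homomorphism, its image is generated by $\{{\bf b}_\tau : \tau\in\cB\}$. As the braid group of $\cC$ is, by construction, the image of $\rho$ inside $\braidSymbol_k$, the theorem follows.

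The main obstacle is step (i): making the homotopy invariance rigorous. One must describe the codimension-one failures of properness/transversality of the cross-locus as an embedded loop is deformed (a pair of strands becoming tangent in the $x$-coordinate, or three strands momentarily sharing an $x$-coordinate), and check case-by-case that each such event corresponds to a braid relation $\sigma_i\sigma_{i+1}\sigma_i=\sigma_{i+1}\sigma_i\sigma_{i+1}$ or $\sigma_i\sigma_j=\sigma_j\sigma_i$ (for $|i-j|\geq 2$), or a cancellation $\sigma_i\sigma_i^{-1}=1$. This is essentially the classical identification, in the setting of numerical braid-tracking, of geometric braids with the Artin presentation; the remaining steps are then purely formal.
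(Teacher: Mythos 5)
Your proposal is correct and follows essentially the same route as the paper's (very terse) proof: the whole content is that $\pi_1(\mathbb{C}_t^1\setminus\cB,\beta)$ is free on loops encircling the branch points, so the braids they induce generate the image of the braid-tracking representation. You go further than the paper by explicitly isolating and sketching the homotopy-invariance of the assignment $\gamma\mapsto{\bf b}_\gamma$ (your step (i)), which the paper's two-sentence argument takes for granted; that added care is appropriate and does not change the underlying approach.
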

\begin{proof}
The fundamental group of $\mathbb{C}_t^1\setminus\branch$ is generated by the loops around each branch point. Any loop in $\mathbb{C}_t^1\setminus\branch$ is homotopy equivalent to a path represented by an element in $\pi_1(\mathbb{C}_t^1\setminus\branch, \text{base point})$.
\end{proof}

\section{Cross-locus of a line segment}\label{sec:twist}

In this section we develop a set of equations that allow us to determine the cross-locus of a line segment. 
We then determine the cross-locus of the path $\gamma$ by considering it as concatenation of finitely many line segments.
To a loop $\gamma$ (with base point $\beta$), we will associate a sequence of crossings to make an element of the braid group ${\bf b}_{\gamma}$. 
Let ${\cT}_{\gamma}$ denote the sequence of points in $\gamma$ that are ordered by their relative position on the path $\gamma$ and suppose $\cT_\gamma$ contains $\Cross_\pi(P)$.
 We call ${\cT}_{\gamma}$ an {\emph{ordered superset of the cross-locus of the loop $\gamma$} (for the projection of the curve ${\cC}$). 
The braid associated to the loop $\gamma$, denoted ${\bf b}_{\gamma}$, is the sequence of crossings of fibers induced by the respective sequence of points in ${\cT}_{\gamma}$.

%


\subsection{Finiteness of the twist locus}
One important fact is the following. 

\begin{lemma}\label{lem:generalSegment}
Let $\mathcal{C}$ be the curve in $\mathbb{C}_z\times \mathbb{C}_t$ defined by $f(z, t)=0$. 
Let $\lambda$ be a nonzero complex number. 
Denote the curve in $\mathbb{C}_z\times \mathbb{C}_t$ defined by $f(\lambda z, t)=0$ by $\mathcal{C}_\lambda$. 
Suppose $\lambda$ is general. Then for curve $\mathcal{C}_\lambda$, the following statements hold. 

\begin{enumerate}
\item\label{i1} A general loop $\gamma$ in $\mathbb{C}_{t}^{1}$ has finitely many points
with a crossed fiber. 
\item\label{i2} If a general loop has only proper crossed fibers, then it has only transversal crossed fibers.
\item\label{i3} If the curve $C$ is irreducible, then a general loop has only proper (hence transversal) crossed fibers. 
\end{enumerate}

\end{lemma}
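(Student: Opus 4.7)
The plan is to express each of the three statements as a dimension-count or transversality claim on suitable incidence varieties associated to $\mathcal{C}_\lambda$, and then use the generality of $\lambda$ and of the loop $\gamma$ to avoid the bad loci. Identify $\mathbb{C}_z \cong \mathbb{R}^2$ via real and imaginary parts, so that a crossed fiber of $\mathcal{C}_\lambda$ over $t$ is equivalent to having two distinct roots $w_1, w_2$ of $f(\,\cdot\,, t) = 0$ with $w_1 - w_2 \in \sqrt{-1}\lambda\,\mathbb{R}$. For $k \geq 2$ let $V_k \subset \mathbb{C}^{k+1}$ be the incidence variety of tuples $(w_1, \ldots, w_k, t)$ with $f(w_i, t) = 0$ and the $w_i$ pairwise distinct; it projects to $\mathbb{C}_t$ with finite generic fibers, so each component has complex dimension $1$, i.e.\ real dimension $2$.

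For part (1), consider the real-analytic map $\Phi : V_2 \to \mathbb{R}/\pi\mathbb{Z}$ given by $\Phi(w_1, w_2, t) = \arg(w_1 - w_2)$. On each irreducible component of $V_2$, $\Phi$ is either non-constant, in which case every fiber $\Phi^{-1}(\theta)$ has real dimension at most $1$, or constant, in which case $w_1 - w_2$ is identically a fixed $c \in \mathbb{C}^*$ on that component (a holomorphic function taking values in a real ray is constant). Only finitely many such $c$ can arise, and for general $\lambda$ the direction $\theta_\lambda := \arg(\sqrt{-1}\lambda) \bmod \pi$ avoids all of the exceptional directions. Thus the cross-locus of $\mathcal{C}_\lambda$ in $\mathbb{C}_t^1$, being the projection of $\Phi^{-1}(\theta_\lambda)$, is real-analytic of real dimension at most $1$. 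A generic loop $\gamma$ is transversal to this set and meets it in finitely many points.

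For part (2), a crossing at $t_0 \in \gamma$ fails to be transversal precisely when $\gamma$ is tangent to the real $1$-dimensional cross-locus at $t_0$; tangency to a fixed real curve is a codimension-$1$ condition on the loop and is excluded by a generic perturbation. At a proper crossing, only two branches $z^{(i)}, z^{(j)}$ collide, and transversality of $\gamma$ to the cross-locus translates directly into transversality of the graphs $z^{(i)}(s), z^{(j)}(s)$ in the braid-tracking plane, which is exactly what is needed. For part (3), consider the real-analytic map $\Psi : V_3 \to (\mathbb{R}/\pi\mathbb{Z})^2$ sending $(w_1, w_2, w_3, t)$ to $(\arg(w_1 - w_2),\, \arg(w_2 - w_3))$. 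A non-proper crossing at $t$ corresponds to a point of $\Psi^{-1}(\theta_\lambda, \theta_\lambda)$, and since source and target both have real dimension $2$, Sard's theorem gives that for generic $(\theta, \theta)$ this preimage is real $0$-dimensional, provided no component of $V_3$ is carried by $\Psi$ entirely into the diagonal $\{\theta_1 = \theta_2\}$. Such a component would force three distinct branches of $f$ to remain collinear in $\mathbb{C}$ across an entire $1$-parameter family; irreducibility of $\mathcal{C}$ rules this out via the transitivity of the monodromy of $\pi : \mathcal{C} \to \mathbb{C}_t$ on ordered triples of distinct roots. Hence the triple-crossing locus in $\mathbb{C}_t^1$ is finite, and a general loop avoids it.

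The main obstacle is the last step in part (3): ruling out a $2$-dimensional component of $V_3$ consisting of collinear triples. The cleanest route is monodromic: a global collinearity relation that persists on an irreducible component of $V_3$ is preserved by $\pi_1(\mathbb{C}_t^1 \setminus \cB)$, and when $\mathcal{C}$ is irreducible this fundamental group acts transitively on ordered triples of distinct roots (once the fiber degree is at least $4$; the degree-$3$ case is handled directly by noting that identically parallel differences $w_1 - w_2$ and $w_2 - w_3$ would force $\mathcal{C}$ to split). Propagating the collinearity to every ordered triple produces an easy contradiction, and with this obstruction removed the remaining transversality arguments that establish (1), (2), and (3) are standard.
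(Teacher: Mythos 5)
Your parts (1) and (2) follow essentially the same route as the paper: the paper phrases (1) as a codimension count on an incidence variety over the $\lambda$-parameter space, and proves (2) via the Cauchy--Riemann equations (which is exactly the justification you are implicitly leaning on when you say the cross-locus is a real curve to which a generic loop is transverse --- the critical points of $\mathrm{Re}(z^{(i)}-z^{(j)})$ coincide with those of the holomorphic function $z^{(i)}-z^{(j)}$ and are therefore isolated). One technical caveat in your part (3): Sard's theorem applied to $\Psi\colon V_3\to(\mathbb{R}/\pi\mathbb{Z})^2$ says nothing about values on the diagonal $\Delta$, which has measure zero in the target. You need the two-step version: first show $\Psi^{-1}(\Delta)$ has real dimension at most $1$ (no component of $V_3$ maps into $\Delta$), then choose $\theta_\lambda$ generic for the restriction of one coordinate of $\Psi$ to $\Psi^{-1}(\Delta)$. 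This is precisely why the paper treats the case $\dim_{\mathbb{R}}S_B=1$ separately, by a pigeonhole over $l+1$ general values of $\lambda$.

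The genuine gap is the key step of part (3). You claim that for an irreducible curve of fiber degree at least $4$ the monodromy group acts transitively on ordered triples of distinct roots. That is false: irreducibility gives only transitivity on single roots. For $z^n-t=0$ the monodromy is cyclic of order $n$, hence not even $2$-transitive for $n\geq 3$, so "propagating the collinearity to every ordered triple" has nothing to stand on; the parenthetical claim that in degree $3$ identically parallel differences force $\mathcal{C}$ to split is likewise unsubstantiated. The paper closes exactly this hole using only $1$-transitivity: if a component of $V_3$ consists entirely of collinear triples, then $(w_3-w_1)/(w_2-w_1)$ is a real-valued holomorphic function of $t$, hence a real constant, so after relabeling $w_2$ lies strictly between $w_1$ and $w_3$ in every fiber, and this betweenness persists under analytic continuation along any loop. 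Transitivity then supplies a monodromy element carrying $w_2$ to a vertex of the convex hull of the fiber, contradicting that its continuation always remains strictly between two other fiber points. You need this convex-hull (or some equivalent) argument to finish part (3); as written, the proof does not go through.
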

\begin{proof}
Let $S_{\lambda}\subset \mathbb{C}_t$ be the subset where $C_\lambda$ has a crossed fiber. Let $S_B\subset \mathbb{C}_t$ be the subset where at least three points of the fibers of $C$ are on a real line in $\mathbb{C}_z$. 

Clearly, $S_B$ and $S_\lambda$ are real algebraic subsets of $\mathbb{C}_t$. The set $\mathcal{S}=\{(z, t, \lambda)\in \mathbb{C}_z\times \mathbb{C}_t\times \mathbb{C}^*_\lambda|z\in S_\lambda\}$ is a real algebraic subset of $\mathbb{C}_z\times \mathbb{C}_t\times \mathbb{C}^*_\lambda$ of real codimension one. Therefore, for a general $\lambda$, $S_\lambda$ is of real codimension one. Thus for such $\lambda$, a general loop only intersects $S_\lambda$ at finitely many points. Hence (\ref{i1}) follows. 

Given (\ref{i1}), we only need to prove (\ref{i2}) locally in $\mathbb{C}_t$. Locally, we can express the fibers $z^{(1)}, \ldots, z^{(k)}$ as holomorphic functions of $t$. Notice that $Re(z^{(i_1)})-Re(z^{(i_2)})=Re(z^{(i_1)}-z^{(i_2)})$ is the real part of a holomorphic function. According to Cauchy-Riemann equation, as functions of $t$, the critical points of $Re(z^{(i_1)}-z^{(i_2)})$ is equal to the critical points of $z^{(i_1)}-z^{(i_2)}$. Hence the critical points of $Re(z^{(i_1)}-z^{(i_2)})$ are isolated. Thus, the real locus defined by the equation $Re(z^{(i_1}))=Re(z^{(i_2)})$ is generically reduced for $i_1\neq i_2$. Therefore, a general loop will intersect the locus $Re(z^{(i_1)})=Re(z^{(i_2)})$ transversally, and hence a general loop has only transversal crossed fibers. 

For part (\ref{i3}), we only need to show that for a general $\lambda$, the intersection $S_B\cap S_\lambda$ is empty or zero-dimensional, because every loop that does not passes through $S_B\cap S_\lambda$ only have proper crossed fibers. So we only need to consider the cases when $S_B$ is of real dimension one or two. 

Suppose $C$ is irreducible. We claim that $S_B$ is a proper subset of $\mathbb{C}_t$. Suppose $S_B=\mathbb{C}_t$. As before, we can consider $z^{(1)}, \ldots, z^{(k)}$ as local holomorphic functions of $t$. Then locally, there exist $i_1, i_2$ and $i_3$ such that the ratio $(z^{(i_3)}-z^{(i_1)})/(z^{(i_2)}-z^{(i_1)})$ locally takes values in real numbers. However, $(z^{(i_3)}-z^{(i_1)})/(z^{(i_2)}-z^{(i_1)})$ is holomorphic. Even locally, the only real valued holomorphic functions are constant functions. Thus the ratio $(z^{(i_3)}-z^{(i_1)})/(z^{(i_2)}-z^{(i_1)})$ is a constant real number. Thus, this ratio is preserved by any monodromy translation. Without loss of generality, we can assume that locally $z^{(i_2)}$ is on the line segment between $z^{(i_1)}$ and $z^{(i_3)}$. Since $C$ is irreducible, the Galois action on the fiber is transitive. Hence there exists a Galois action that maps $z^{(i_2)}$ to another fiber, which lies on a vertex of the convex hall spanned by the points in the fiber. However, the Galois action can be realized by a monodromy translation through a loop in $\mathbb{C}_t$. Throughout this loop, the translation of $z^{(i_2)}$ always lie in between two other points in the fibers. This is a contradiction that the image of $z^{(i_2)}$ under the Galois action becomes a vertex on the convex hall of all fibers. 

Now, the only case left is when $S_B$ has real dimension 1. Suppose $S_B$ has $l$ irreducible 1-dimensional components. Let $\lambda_0, \lambda_1, \ldots, \lambda_l$ be general numbers in $\mathbb{C}^*$. Clearly, any pair of $S_{\lambda_0}, S_{\lambda_1}, \ldots, S_{\lambda_l}$ does not share a 1-dimensional irreducible component. Therefore, for some $i$, $S_{\lambda_i}$ does not share a 1-dimensional irreducible component with $S_B$, and hence $S_{\lambda_i}\cap S_B$ is of dimension zero. This construction shows that for a general $\lambda$, $S_{\lambda}\cap S_B$ is of dimension zero.

\end{proof}

\begin{remark}\label{badcrossing}
When the curve $C$ is reducible, it is possible that some crossed fiber of $C_\lambda$ on a general loop contains more than two points. For example, suppose $C$ is defined by $(z-t)(z-2t)(z-3t)=0$ or $z(z^2-t)=0$. Then every crossed fiber has three points. If a crossed fiber has $l$ elements, then the crossing at this point can be represented by a product of $(l-1)!$ standard generators and their inverses. 
We do not pursue this direction here. 
\end{remark}

\subsection{Equations for the cross-locus}\label{ss:mainResult}

In this subsection we describe how to compute a finite set $\cT_\gamma$ containing
the cross-locus.
This is done by using equations \eqref{eq:twistLocus} to determine such a set over a line segment  
$\Ldelta$ for the projection 
$\pi:\mathcal{C}\to\mathbb{C}_t^1$.

\begin{prop}\label{prop:eq}
Let $s$ in the interval $[0,1]$ parameterize $\Ldelta.$
The isolated solutions to the following set of equations with $s\in[0,1]$ contain the cross-locus, 
where for $a\in \mathbb{C}$, 
$\bar a$ denotes the complex conjugate of $a$,  
$f(z,t)=\sum_{i,j}a_{i,j}z^it^j$, and $g(z,t):=\sum_{i,j}\bar a_{i,j}z^it^j$.

\begin{equation}\label{eq:twistLocus}
\begin{aligned}
f\left(x+\sqrt{-1}y_1,(1-s)\delta'+s\delta''\right)=0,    
&&  g\left(x-\sqrt{-1}y_1,(1-s)\bar\delta'+s\bar\delta''\right)=0\\
 f\left(x+\sqrt{-1}y_2,(1-s)\delta'+s\delta''\right)=0,   
 &&	g\left(x-\sqrt{-1}y_2,(1-s)\bar\delta'+s\bar\delta''\right)=0
\end{aligned}
\end{equation}

\end{prop}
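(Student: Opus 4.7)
The plan is to translate the geometric crossing condition into the algebraic system \eqref{eq:twistLocus} and then argue that each cross-locus point on $\Ldelta$ arises as an isolated solution.

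\textbf{Step 1 (Crossings give solutions).} Suppose $s_0\in[0,1]$ corresponds to a crossing, so that $\pi^{-1}(t(s_0))$, where $t(s_0)=(1-s_0)\delta'+s_0\delta''$, contains two distinct points $z^{(1)},z^{(2)}$ with $\operatorname{Re}z^{(1)}=\operatorname{Re}z^{(2)}=x_0$. Write $z^{(k)}=x_0+\sqrt{-1}\,y_{k,0}$ for real $y_{1,0}\neq y_{2,0}$. The condition that $z^{(k)}$ lies on $\mathcal{C}$ over $t(s_0)$ gives $f(x_0+\sqrt{-1}\,y_{k,0},t(s_0))=0$, which is the first and third equation of \eqref{eq:twistLocus}. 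Taking complex conjugates and using that $\overline{f(z,t)}=g(\bar z,\bar t)$ by construction of $g$, one obtains $g(x_0-\sqrt{-1}\,y_{k,0},\overline{t(s_0)})=0$. Since $s_0\in\mathbb{R}$, $\overline{t(s_0)}=(1-s_0)\bar\delta'+s_0\bar\delta''$, and this is exactly the second and fourth equations. Thus $(x_0,y_{1,0},y_{2,0},s_0)$ solves \eqref{eq:twistLocus}.

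\textbf{Step 2 (Isolation).} The system has four equations in four unknowns $(x,y_1,y_2,s)$, and admits an obvious positive-dimensional component along the diagonal $y_1=y_2$: on this locus the first and third equations coincide, as do the second and fourth, leaving two equations in three unknowns whose real zero set is the union of sheets $\{(x,y,s):x+\sqrt{-1}\,y\in\pi^{-1}(t(s))\}$. Away from the diagonal, the four equations are generically independent. By Lemma~\ref{lem:generalSegment}(\ref{i1})--(\ref{i2}), for generic data the cross-locus on $\Ldelta$ is finite and the sheets meet the condition $\operatorname{Re}z^{(i)}=\operatorname{Re}z^{(j)}$ transversally. This transversality translates into nondegeneracy of the Jacobian of \eqref{eq:twistLocus} at $(x_0,y_{1,0},y_{2,0},s_0)$, exhibiting it as an isolated solution.

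\textbf{Main obstacle.} The substantive technical point is verifying that transversality of the sheet crossing implies nondegeneracy of the full Jacobian of the conjugate-augmented system. The natural route is to mimic the argument in the proof of Lemma~\ref{lem:generalSegment}(\ref{i2}): locally near $s_0$, express $z^{(i)}$ and $z^{(j)}$ as holomorphic functions of $t$ along $\Ldelta$, so that $\operatorname{Re}(z^{(i)}-z^{(j)})$ is the real part of a holomorphic function of $s$; by the Cauchy--Riemann equations its zeros are generically simple, which forces the $s$-derivative of the $x$-component of the first pair of equations (after eliminating $y_1$ from them) to be nonzero at $s_0$. A block decomposition of the Jacobian, together with the analogous statement for the second pair of equations, then gives maximal rank off the diagonal. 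A minor bookkeeping remark is that each unordered crossing pair $\{z^{(i)},z^{(j)}\}$ produces two isolated solutions related by the swap $(y_1,y_2)\mapsto(y_2,y_1)$; both recover the same cross-locus point, so the containment asserted by the proposition still follows.
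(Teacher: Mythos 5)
Your proposal is correct and follows essentially the same route as the paper: the conjugate polynomial $g$ is introduced so that real solutions of each $f,g$ pair correspond to actual points of $\mathcal{C}$ over the segment, and the two pairs form the fiber product over $(x,s)$ in $y_1,y_2$, with the diagonal $y_1=y_2$ excluded as the non-isolated component. The paper's own proof stops at that translation and leaves the isolation of the crossing solutions implicit; your Step 2, which derives isolation from the transversality statement of Lemma \ref{lem:generalSegment}(\ref{i2}), supplies exactly the ingredient the paper instead invokes only later, in the proof of correctness of Algorithm \ref{algo:braid}.
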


\begin{proof}
The first equation replaces $z$ with $x+\sqrt{-1}y$. 
The second equation is the complex conjugate of the first so that the real solutions $(x,y_1,s)$ coincide with $(z=x+\sqrt{-1}y_1,s)$. 
The last pair of equations yield the fiber product over $(x,s)$ with respect to $y_1,y_2$. 
The isolated solutions are outside the locus where $y_1=y_2.$ 
\end{proof}

\subsection{Computing the braid group (main result)}

Algorithm \ref{algo:braid} produces a generator of the braid group. 

\begin{algorithm}
\DontPrintSemicolon 
\KwIn{
\begin{enumerate}
\item $f\left(z,t\right)=0$ defining a curve. 
\item A finite set $R$ containing the branch locus ${\cB}$ of our curve. 
\end{enumerate}
}
\KwOut{A set of generators of the braid group of the curve $\mathcal{C}.$}
Set $T$ to be the empty set.\;
Set $\bf{B}$ to be the empty set.\;
\For{  $\tau$ in $R$ } {\label{item:important} 
  Construct a loop $\gamma$ encircling the branch point $\tau$ such that it is a concatenation of line segments $G$. \;
  \For{ $\Ldelta$ in $G$}{
  Determine a finite set $D$ containing the cross-locus of $\Ldelta$ by solving \eqref{eq:twistLocus}.\;
  Order the elements of $D$ with respect to the path and append to $T$.}
Set $\bf{b}$ to be the empty set (This corresponds to the identity element of the braid group).\;  
\For{ $p$ in $T$}{
  Use homotopy continuation to track and determine the crossing. \;
  Append the crossing to $\bf{b}$.\;
  \If{$\bf{b}\neq\emptyset$} {
  Append the twist induced by taking the product of crossings of $\bf{b}$ to $\bf{B}$.}    }}
\Return{{\bf B} }\;
\caption{Generators of the braid group}
\label{algo:braid}
\end{algorithm}

\begin{proof}[Proof of Correctness]
The proof of correctness follows from Lemma \ref{lem:generalSegment} and Proposition \ref{prop:eq}.
\end{proof}

\section{Illustrative examples}\label{sec:illustrate}
In this section we consider four examples.

\begin{example}\label{ex:oneBranchpoint}
Let $\mathcal{C}_i$ be defined by 
$f_i:=z^{3}-t^{i}=0$ for $i=1,2$. 
Each of these curves have a unique branch point at $t=0$. 
Using Algorithm \ref{algo:braid} we compute the generator of each braid group of $\pi:\mathcal{C}_i\to\mathbb{C}_t^1$ as seen in Figure~\ref{fig:oneBranchPoint}.

\begin{figure}[hbt!]
\hspace{-16em}$
\begin{array}{c}
\begin{array}{cc}
{\includegraphics[scale=0.50]{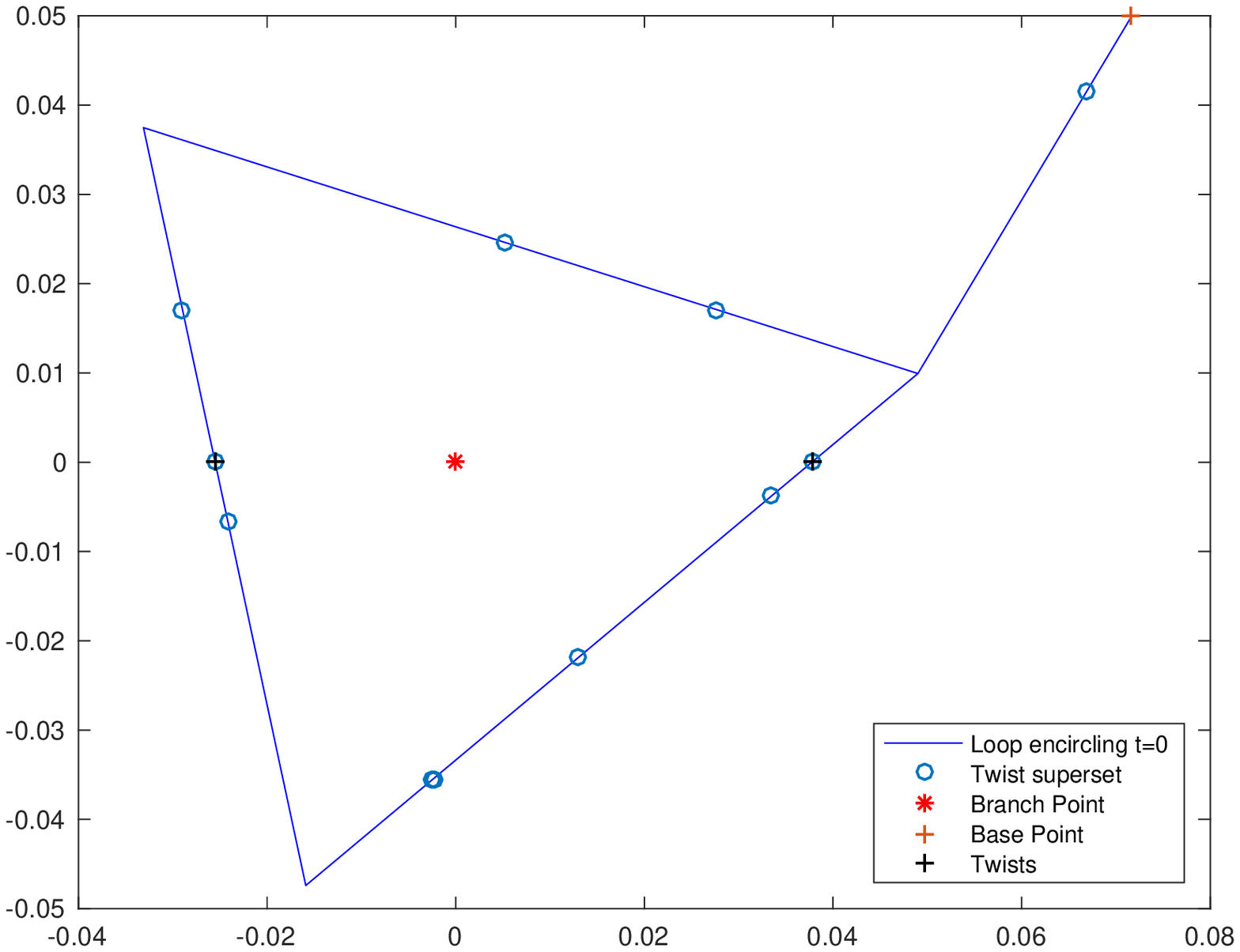}}&
\put(0,120){
$
\begin{array}{c}
\text{Braid for $z^3-t$}\\
\hline
\begin{tikzpicture} 
\braid[number of strands=5,height=.5in,
       style strands={4,5,6}{draw=none}]] (braid)  a_4 a_2 a_1 a_4   ;
 \put(95,-64){\scriptsize$t=-.0255$} 
 \put(95,-104){\scriptsize$t=-.0333\sqrt{-1}$} 
\end{tikzpicture}
\end{array}
$}
\end{array}\\
\begin{array}{cc}
{\includegraphics[scale=0.50]{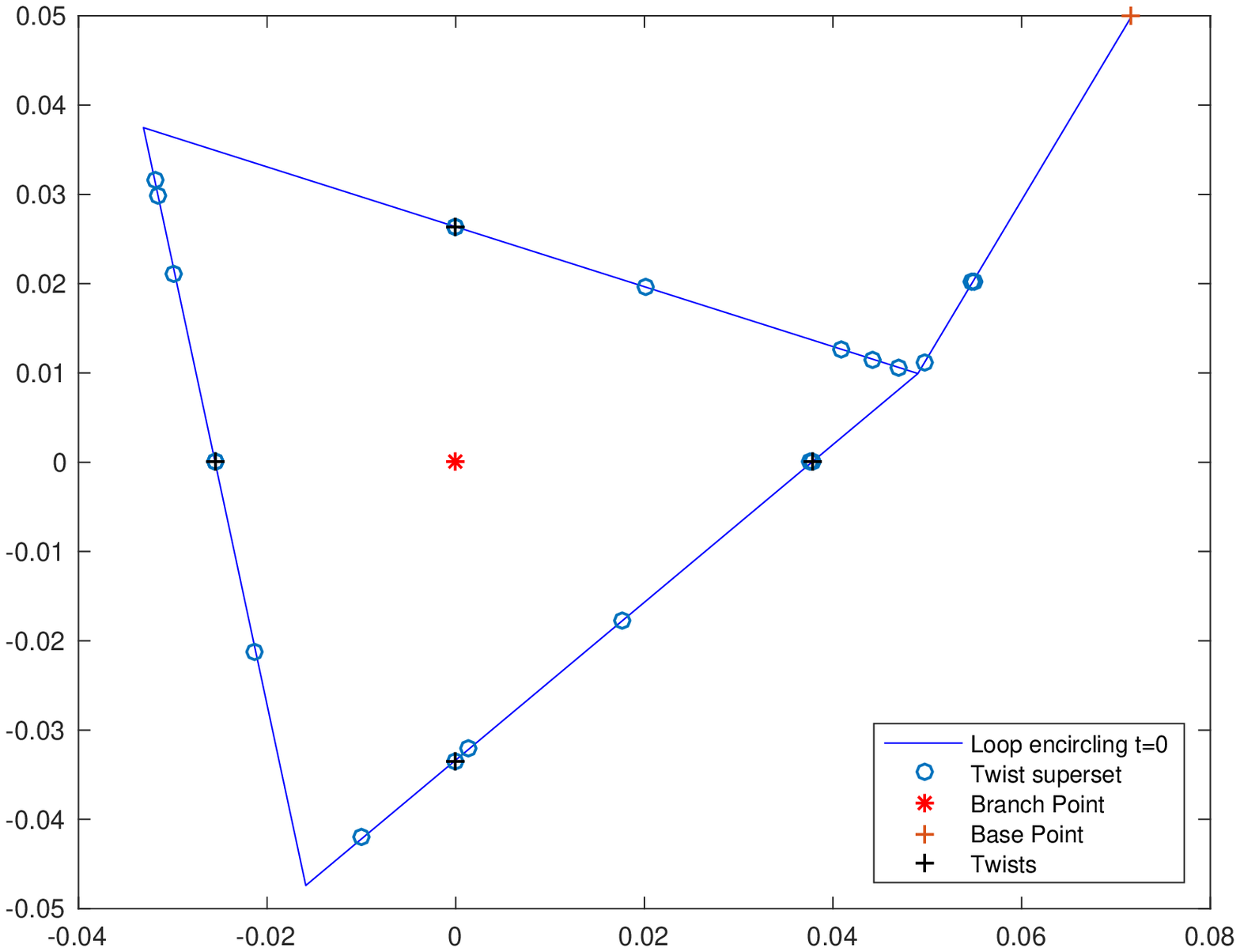}}&
\put(0,120){
$
\begin{array}{c}
\text{Braid for $z^3-t^2$}\\
\hline
\begin{tikzpicture}
\braid[number of strands=5,height=.5in,
       style strands={4,5,6}{draw=none}]] (braid)  a_{2} a_{1} a_2 a_1   ;
 \put(95,-30){\scriptsize$t=.0263 \sqrt{-1}$} 
 \put(95,-64){\scriptsize$t=-.0255$} 
 \put(95,-104){\scriptsize$t=-.0333\sqrt{-1}$} 
 \put(95,-140){\scriptsize$t=.0377$} 
\end{tikzpicture}
\end{array}
$}
\end{array}
\end{array}
$
\caption{
The xy-axis of the plots are the real and imaginary parts of $t$.
We vary $t$ along the plotted five directed line segments when performing homotopy continuation counter clockwise along the triangle.  
 }\label{fig:oneBranchPoint}
\end{figure}

\end{example}

\pagebreak
\pagebreak
\begin{example}\label{ex:twoBranchpoint}
Let $\mathcal{C}$ be defined by 
$f=z^4-4z^2+3+t$. 
The projection $\pi:\mathcal{C}\to\mathbb{C}_t^1$
has two branch points as seen in Figure \ref{fig:twoBranchPoints}.
We take a loop around each branch point that is a concatenation of straight line segments. 
Each loop has precisely one point in the cross-locus. 
We consider the points of the fiber of each  cross point. 
In each fiber, we swap the  order of the points accordingly
 and record a generator of the braid group. 
\begin{figure}[htb] 
\includegraphics[scale=.6]{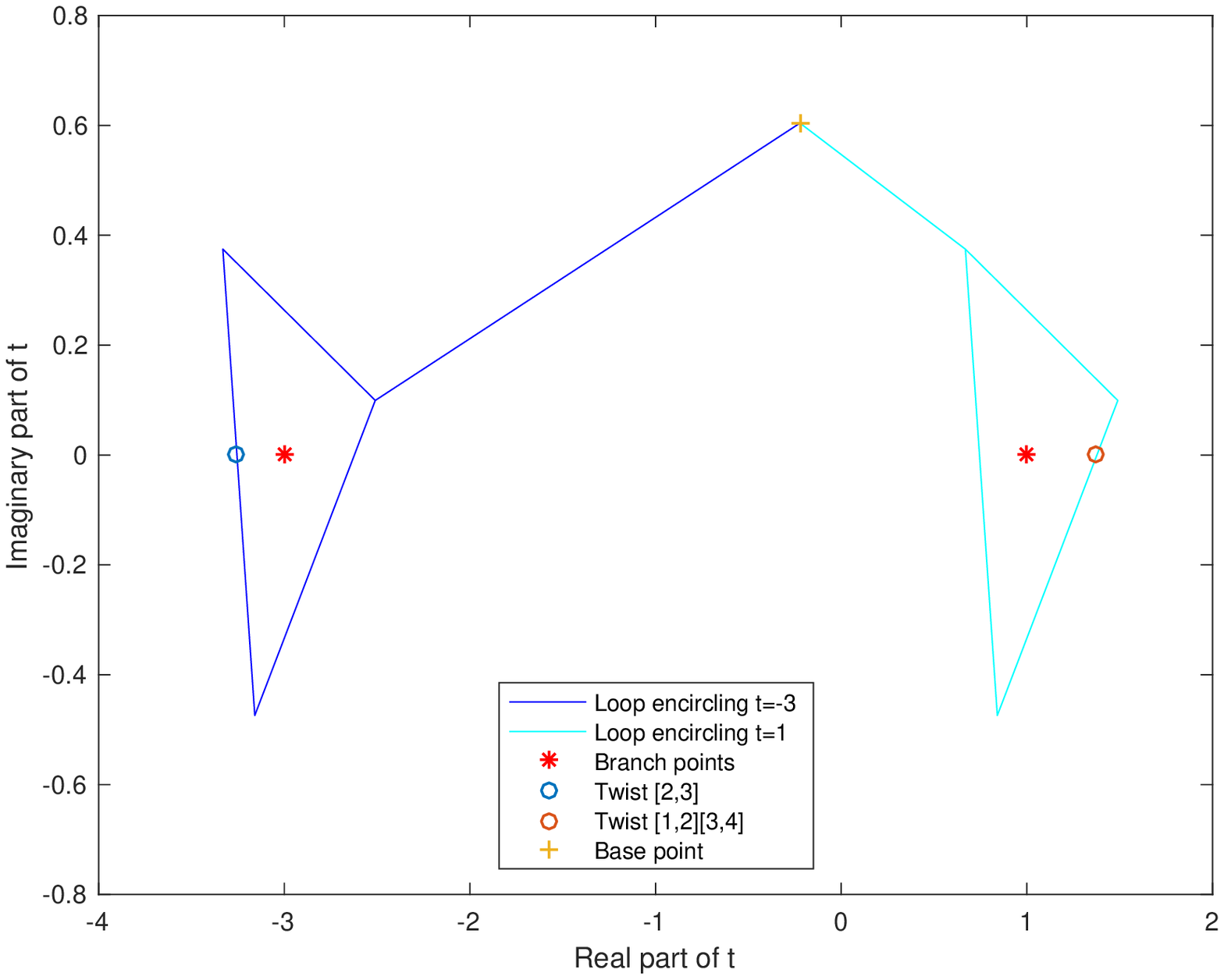}
\end{figure}

\begin{figure}[htb] 
\hspace{-20em}$
\begin{array}{cc}
\includegraphics[scale=0.5]{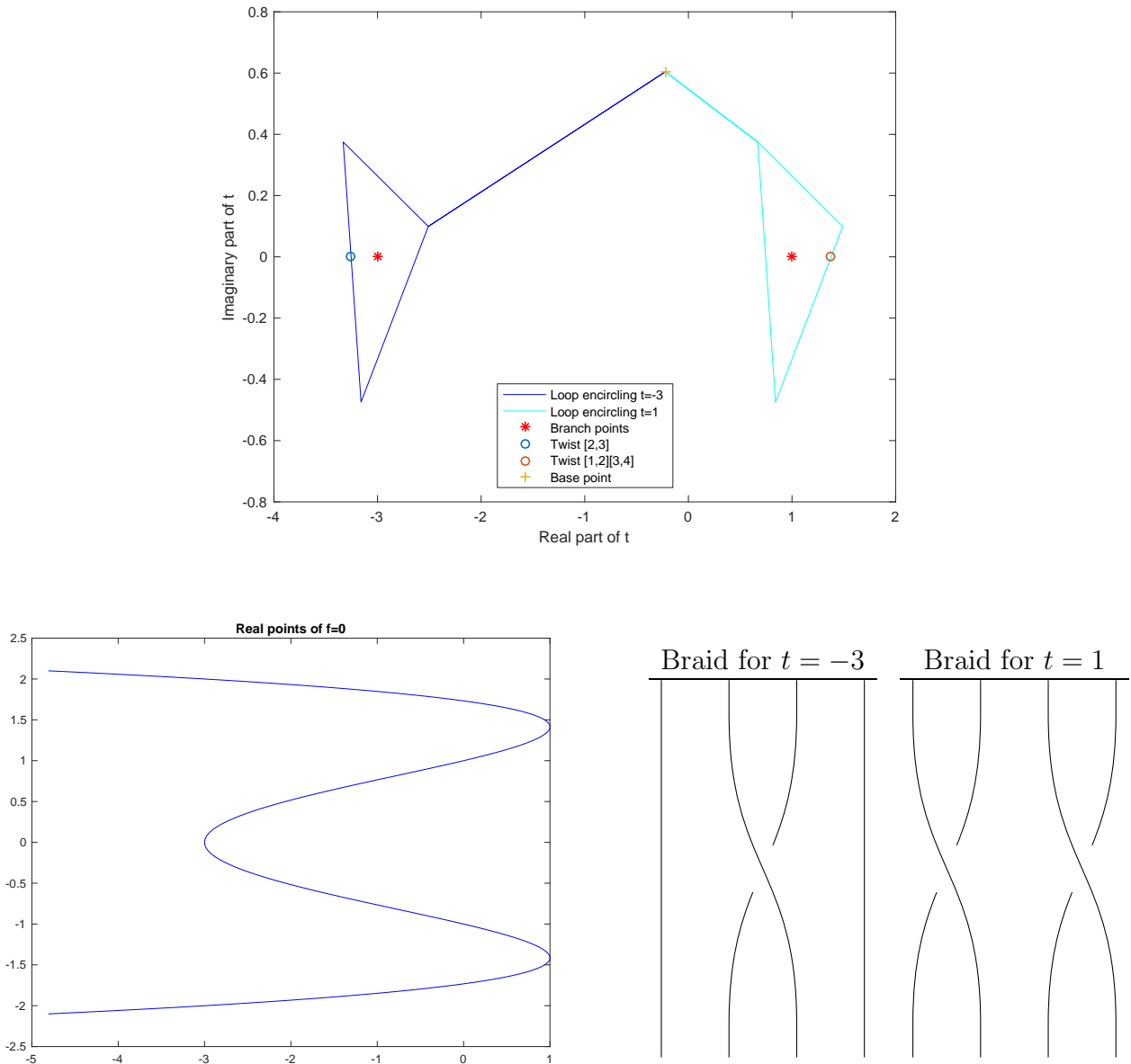}&
\put(0,100){$
\begin{array}{ccc}
\begin{array}{c}
\text{Braid for $t=-3$}\\
\hline
\begin{tikzpicture}
\braid[number of strands=4,height=2in] (braid)  a_2   ;
\end{tikzpicture}
\end{array}
&
\begin{array}{c}
\text{Braid for  $t=1$}\\
\hline
\begin{tikzpicture}
\braid[number of strands=4,height=2in] (braid)  a_{3}-a_{1}   ;
\end{tikzpicture}
\end{array}
\end{array}
$}
\end{array}
$
\caption{Real points of $\mathcal{C}$ and the associated braids.}\label{fig:twoBranchPoints}
\end{figure}

\end{example}

\pagebreak

\begin{example}
We consider an example related to dessin d'enfant. 
Let 
$$g_i:=z^3\left(z^2-2z+\alpha_i\right)^2
\text{ for }i=1,2
\text{ where }
\alpha_1=\frac{34+6\sqrt{21}}{7},  
\,
\alpha_2=\frac{34-6\sqrt{21}}{7}.   
$$
\end{example}
The polynomials $g_1,g_2$ are Galois conjugate and have only two finite branch points.
Let $\mathcal{D}_i$ be defined by $f_i$ where 
$$
f_1(z,t):=-\frac{1}{1000}g_1-t\quad
f_2(z,t):=g_2-\frac{t}{20}.
$$
We compute the braid group for the curves $\mathcal{D}_i$. {See Figure \ref{d1}, \ref{d2} and \ref{d12}. }

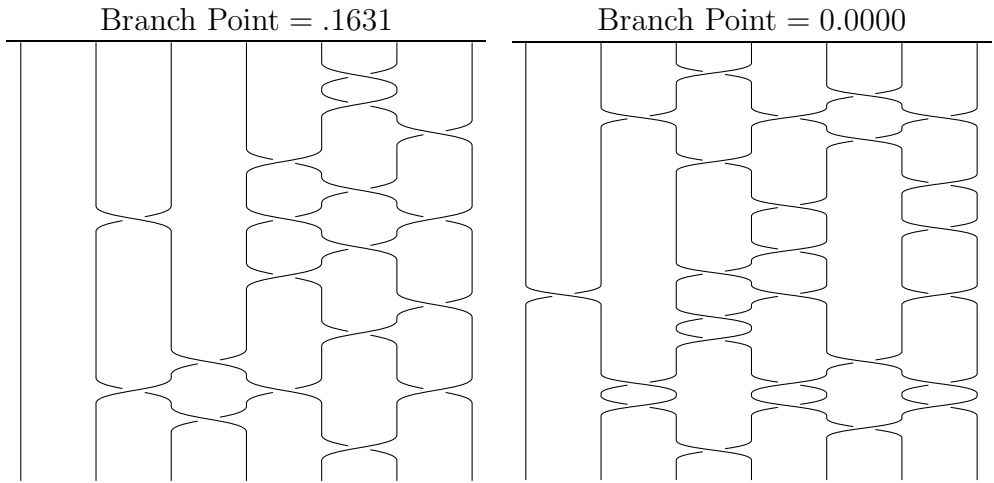
\begin{figure}
$$
\begin{array}{cc}
\begin{array}{c}
\text{Branch Point}=.1631\\
\hline
\centering{
\begin{tikzpicture}
\braid[number of strands=7,height=.15in] (braid) 
a_5
a_5^{-1}
a_6
a_4^{-1}
a_5
a_2-a_4-a_6^{-1}
a_5
a_4^{-1}
a_6
a_5^{-1}
a_3
a_2^{-1}-a_4-a_6^{-1}
a_3
a_5^{-1}
;
\end{tikzpicture}
}
\end{array}
&
\begin{array}{c}
\text{Branch Point}=0.0000\\
\hline
\centering{
\begin{tikzpicture}
\braid[number of strands=7,height=.11625in] (braid)
a_3^{-1}
a_5
a_2-a_4^{-1}-a_6
a_5
a_3^{-1}
a_6^-1
a_4
a_6
a_4^{-1}
a_3
a_1-a_4-a_6^{-1}
a_3
a_3^{-1}
a_5
a_2-a_4^{-1}-a_6
a_2^{-1}-a_4-a_6^{-1}
a_5^{-1}
a_3;
\end{tikzpicture}
} 
\end{array}
\end{array}
 $$\caption{
 This is the braid for the $\mathcal{D}_1$.
 }\label{d1}
\end{figure}

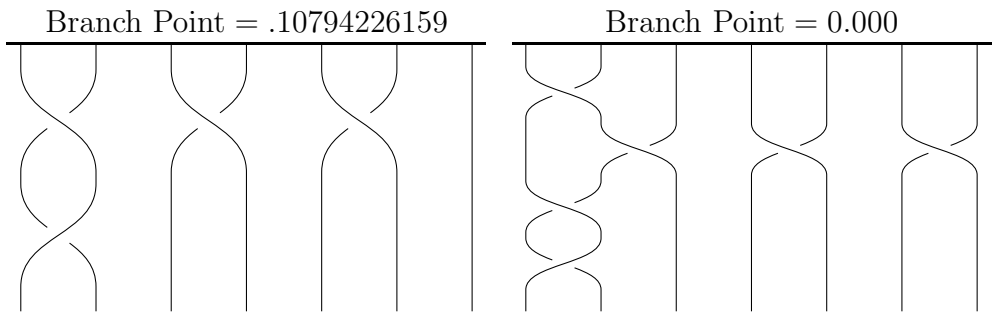
\begin{figure}
$$
\begin{array}{cc}
\begin{array}{c}
\text{Branch Point}=.10794226159\\
\hline
\centering{
\begin{tikzpicture}
\braid[number of strands=7,height=.6in] (braid)
a_1-a_3-a_5 a_1^{-1} ;
\end{tikzpicture}
} 
\end{array}
&
\begin{array}{c}
\text{Branch Point}=0.000\\
\hline
\centering{
\begin{tikzpicture}
\braid[number of strands=7,height=.3in] (braid) 
a_1 a_2-a_4-a_6 a_1 a_1^{-1};
\end{tikzpicture}
}
\end{array}
\end{array}
 $$
\caption{These are the two generators of the braid group for $\mathcal{D}_2.$}\label{d2}
\end{figure}

\begin{figure}[hbt!] 
\includegraphics[scale=0.37]{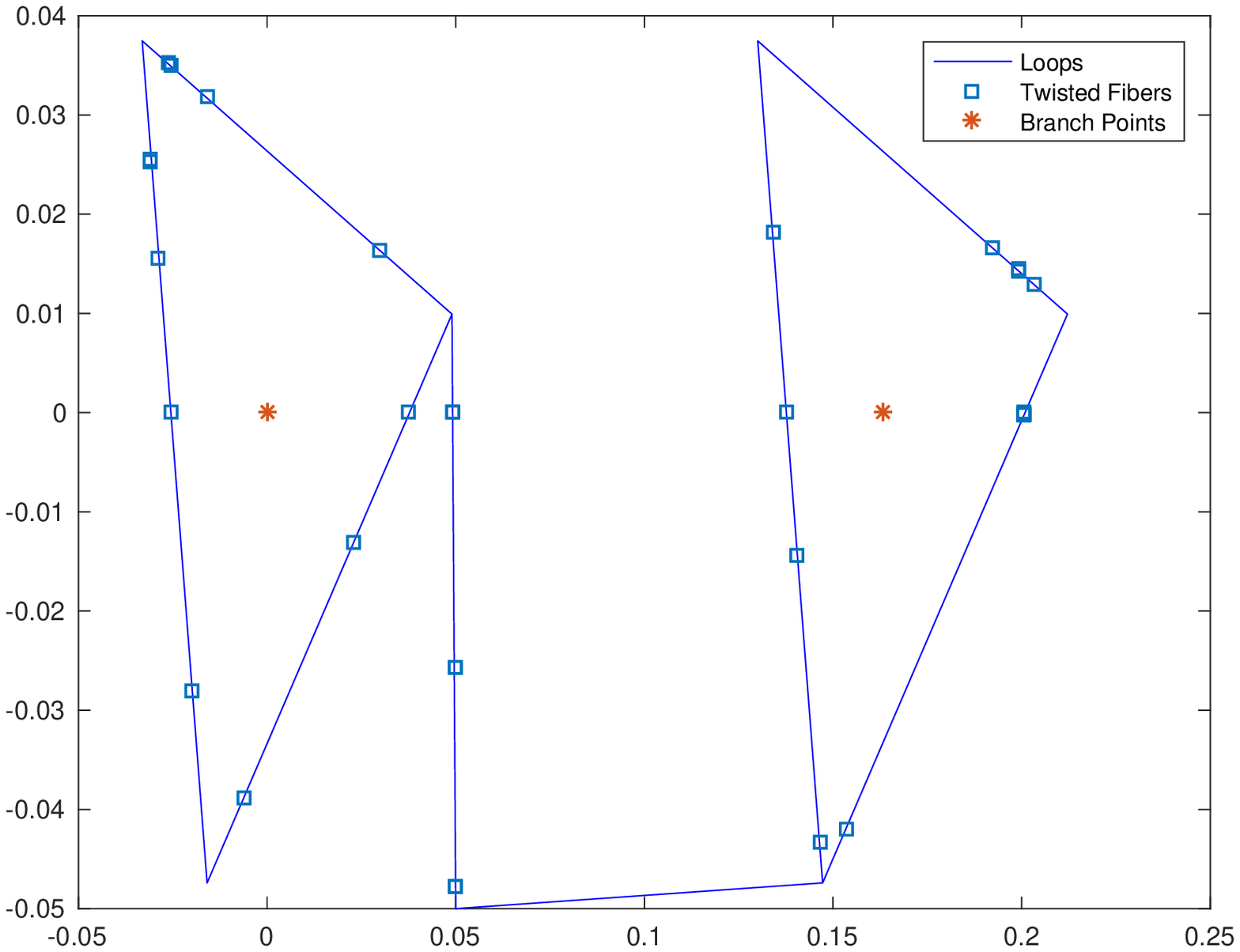} 
\includegraphics[scale=0.37]{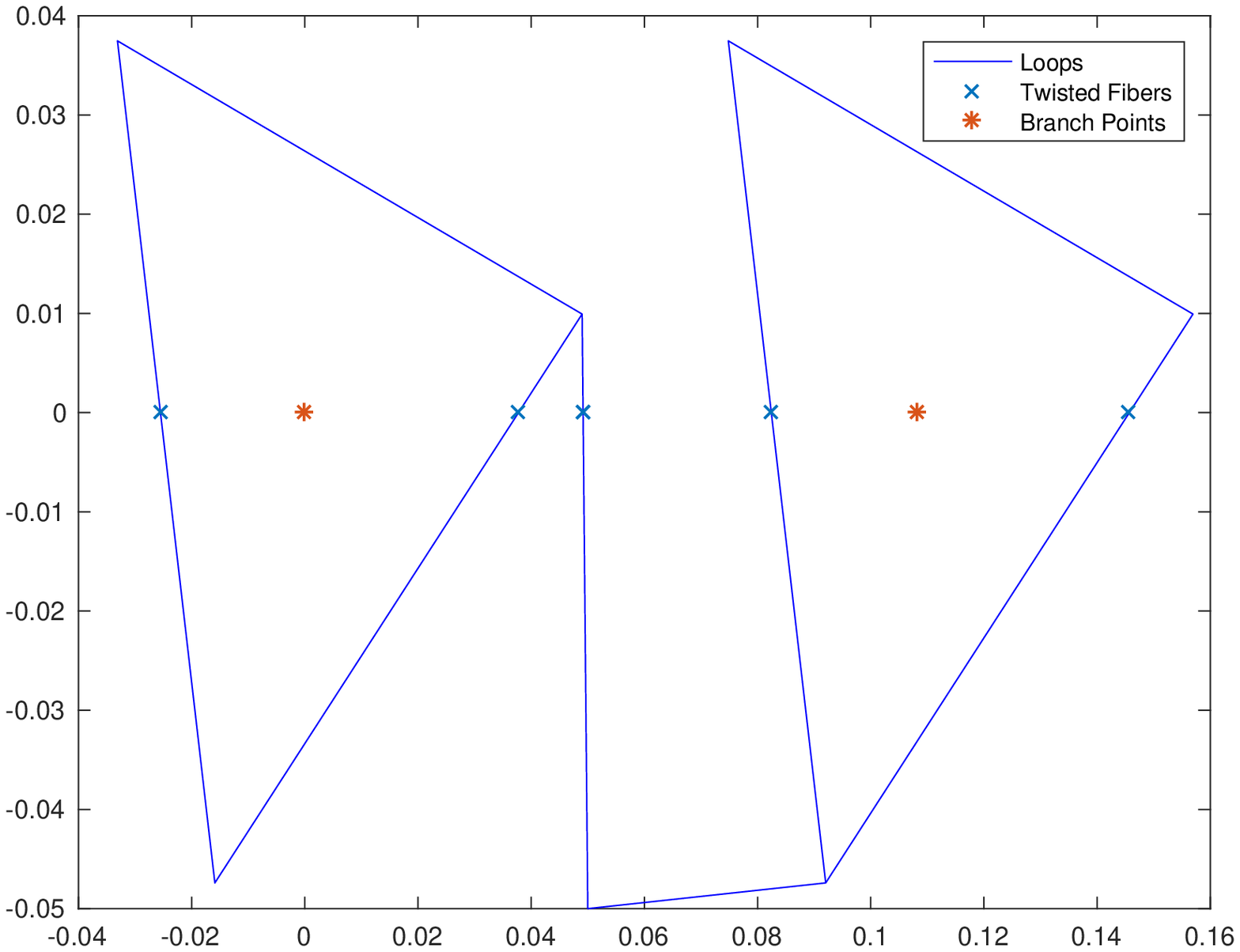}
\caption{These are the loops we tracked and points where we had crossings in the twists 
for $\mathcal{D}_i$. }\label{d12}
\end{figure}

\pagebreak

\begin{example}\label{ex:notGen}
The braid group carries more information than the monodromy group of a curve. 
For example, consider the curve $\mathcal{C}$ defined by 
$$f(z,t):=z^3-t^2(1-t).$$
There are two branch points of this curve with respective braids below.
$$
\begin{array}{ccc}
\begin{array}{c}
\text{ Branch Point: }t=1\\
\hline
\begin{tikzpicture}
\braid[number of strands=3,height=.5in] (braid) 
a_2^{-1} a_1^{-1};
\end{tikzpicture}
\end{array}
&
\begin{array}{c}
\text{ Branch Point: }t=0\\
\hline
\begin{tikzpicture}
\braid[number of strands=3,height=.25in] (braid) 
a_2^{-1} a_1^{-1} a_2^{-1} a_1^{-1};
\end{tikzpicture}
\end{array}
\end{array}
$$

\end{example}
In Example \ref{ex:notGen}, we see that
the monodromy group of this curve is a cyclic group of three elements and is generated by a loop encircling any one of the branch points. 
The braid group on the other hand, is generated by the loop encircling the branch point at $t=1$ and is 
\emph{not} generated by the loop encircling the branch point at $t=0$.
Thus, a set of loops generating the monodromy group does not necessarily yield a set of loops that generate the braid group.


\section{Braid groups of hyperplane arrangements}\label{sec:hyper}

One is interested in braid groups of reducible varieties and in particular hyperplane arrangements. 
As we mentioned in Remark \ref{badcrossing}, care must take care when the fiber is not proper less we do some type of regularization procedure, which is not the focus of this article. 
Here is an example for a hyperplane arrangement with proper fibers.

\begin{example}\label{ex:hyper8}
We consider the following hyperplane arrangement in $\mathbb{P}^2$ 
$$twz(t-w)(t+w+z)=0$$
and consider a general projection to $\mathbb{P}^1$.
The branch locus has eight points. These consists of the pairwise intersections of the lines. 
The twists for these respective branch points are as follows.

\[
\begin{array}{cccccccccccccccccccc}
\begin{array}{c}  
\hline
\begin{tikzpicture} 
\braid[number of strands=5,height=.1in,width=.1in,
       style strands={}{draw=none}] (braid)  a_4 a_3 a_4 a_2 a_3 a_4 a_4 a_3^{-1} a_2^{-1} a_4 a_3^{-1} a_4^{-1};
\end{tikzpicture}
\end{array}
&\quad
\begin{array}{c} 
\hline
\begin{tikzpicture} 
\braid[number of strands=5,height=.1in,width=.1in,
       style strands={}{draw=none}]] (braid)  a_3^{-1} a_4^{-1} a_2^{-1} a_3^{-1} a_1^{1}
       a_2^{-1} a_3^{-1}  a_1^{-1} a_4 a_4 a_1 a_3 a_2 a_1 a_3 a_2 a_4 a_3;
\end{tikzpicture}
\end{array}
&\quad
\begin{array}{c} 
\hline
\begin{tikzpicture} 
\braid[number of strands=5,height=.2in,width=.1in,
       style strands={}{draw=none}]] (braid)  a_3^{-1}  a_4^{-1} a_2^{-1} a_3^{-1} a_3  a_2 a_4 a_3;
\end{tikzpicture} 
\end{array}
&\quad
\begin{array}{c} 
\hline
\begin{tikzpicture} 
\braid[number of strands=5,height=.12in,width=.1in,
       style strands={}{draw=none}]] (braid)  a_3^{-1}  a_4^{-1} a_2^{-1}
       a_3^{-1} a_1^{-1} a_2^{-1} a_1^{-1} a_3 a_1 a_3 a_2 a_1 a_3 a_2 a_4 a_3;
\end{tikzpicture} 
\end{array}
&
\begin{array}{c} 
\hline
\begin{tikzpicture} 
\braid[number of strands=5,height=.3in,width=.1in,
       style strands={}{draw=none}]] (braid)  a_3^{-1} a_4 a_2 a_2 a_4^{-1} a_3;
\end{tikzpicture} 
\end{array}
&\quad
\begin{array}{c} 
\hline
\begin{tikzpicture} 
\braid[number of strands=5,height=.9in,width=.1in,
       style strands={}{draw=none}]] (braid)  a_3 a_3;
\end{tikzpicture} 
\end{array}
&\quad
\begin{array}{c} 
\hline
\begin{tikzpicture} 
\braid[number of strands=5,height=.24in,width=.1in,
       style strands={}{draw=none}]] (braid)  a_3^{-1}  a_4^{-1} a_2^{-1}       a_3 a_3 a_2 a_4 a_3;
\end{tikzpicture} 
\end{array}
&\quad
\begin{array}{c} 
\hline
\begin{tikzpicture} 
\braid[number of strands=5,height=.5in,width=.1in,
       style strands={}{draw=none}]] (braid)  a_3^{-1} a_4 a_4 a_3;
\end{tikzpicture} 
\end{array}
\end{array}
\]

\end{example}

The branch locus of a hyperplane arrangement is the projection of the finite set of union of pairwise intersections of the lines. 
There is at most $\binom{d}{2}$ branch points where $d$ is the number of hyperplanes in the arrangement. 
The cross-locus is determined by consider pairs of hyperplanes. 
This makes the polynomial system much easier to solve. Rather than solving a nonlinear system with Bezout bound of $d^4$ over each segment, 
we determine the cross-locus by solving $\binom{d}{2}$  systems of quadratics with Bezout bound of $2^4$.
This approach is much better when $d\gg 2$. 
Indeed, this leads us to develop Algorithm \ref{algo:braidHyper}.

\begin{algorithm}[hbt!]
\DontPrintSemicolon 
\KwIn{
\begin{enumerate}
\item $h_1(x,t),\dots,h_d(x,t)$ defining the hyperplanes $H_1,\dots,H_d$ of the arrangement. 
\end{enumerate}
}
\KwOut{A set of generators of the braid group of the hyperplane arrangement.}
Set $R$ to be
$\bigcup_{1\leq i_1<i_2\leq d}\,\pi(H_{i_1}\cap H_{i_2})$
with multiplicities removed.\;
Set $T$ to be the empty set, and
set $\bf{B}$ to be the empty set.\;
\For{ Branch point $\tau$ in $R$ } { 
  Construct a loop $\gamma$ encircling $\tau$ such that it is a concatenation of line segments $G$. \;
  \For{ $\Ldelta$ in $G$}{
    \For{ $1\leq i_1<i_2\leq d$}{
    Solve equations \eqref{eq:twistLocus} with $f=h_{i_1}h_{i_2}$ to 
    compute a finite set containing  $\Cross(\Ldelta)$.  \;
  Append  $\Cross(\Ldelta)$ to $T$.}}
Set $\bf{b}$ to be the identity element of the braid group.\;  r
\For{ $p$ in $T$}{
  Use homotopy continuation to track and determine the crossing. \;
  Append the cross to $\bf{b}$.\;
  \If{$\bf{b}\neq\emptyset$} {
  Append the twist $\bf{b}$ to $\bf{B}$.}    }}
\Return{{\bf B} }\;
\caption{Generators of the braid group of a hyperplane arrangement}
\label{algo:braidHyper}
\end{algorithm}

\begin{proof}[Proof of correctness for Algorithm \ref{algo:braidHyper}]
Based on Algorithm 1, the correctness for Algorithm follows from the following lemma. 
\end{proof}
\begin{lemma}
Under the notations of Proposition \ref{prop:eq}, suppose $f(z, t)$ factors as a product of polynomials $f(z, t)=f_1(z, t)\cdots f_l(z, t)$. Then the set of real solutions $(s, x, y_1, y_2)$ of equations (\ref{eq:twistLocus}) is equal to the union of the real solutions of 
\begin{equation}\label{eq:ij}
\begin{aligned}
f_{ij}\left(x+\sqrt{-1}y_1,(1-s)\delta'+s\delta''\right)=0,    
&&  g_{ij}\left(x-\sqrt{-1}y_1,(1-s)\bar\delta'+s\bar\delta''\right)=0\\
 f_{ij}\left(x+\sqrt{-1}y_2,(1-s)\delta'+s\delta''\right)=0,   
 &&	g_{ij}\left(x-\sqrt{-1}y_2,(1-s)\bar\delta'+s\bar\delta''\right)=0
\end{aligned}
\end{equation}
for all $1\leq i<j\leq l$, where $f_{ij}(z, t)=f_i(z, t)f_j(z, t)$ and $g_{ij}$ is the polynomial whose coefficients are the conjugates of the coefficients of $f_{ij}$. 
\end{lemma}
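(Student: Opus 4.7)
The plan is to prove the set equality by double inclusion, after first rewriting both sides purely in terms of the vanishing of the holomorphic polynomials $f$ and $f_{ij}$ along the parameterized line segment $t(s) := (1-s)\delta' + s\delta''$.

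First I would observe that since $g$ is obtained from $f$ by complex conjugation of coefficients, for any real $s, x, y$ we have
\[
g\bigl(x-\sqrt{-1}y,\,\overline{t(s)}\bigr) \;=\; \overline{f\bigl(x+\sqrt{-1}y,\,t(s)\bigr)}.
\]
Therefore, on real tuples $(s, x, y_1, y_2)\in\RR^4$ the two $g$-equations in (\ref{eq:twistLocus}) are automatic consequences of the two $f$-equations. So the real solution set of (\ref{eq:twistLocus}) is precisely the set of real $(s, x, y_1, y_2)$ such that $z_1 := x+\sqrt{-1}y_1$ and $z_2 := x+\sqrt{-1}y_2$ are both roots of $f(\cdot, t(s))$. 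Applying the same conjugation argument to each $f_{ij}$, the real solution set of (\ref{eq:ij}) is the set where $z_1$ and $z_2$ are both roots of $f_{ij}(\cdot, t(s)) = f_i(\cdot, t(s)) f_j(\cdot, t(s))$.

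For the inclusion $\supseteq$, I would note that $f_{ij}$ divides $f$, so any simultaneous root of $f_{ij}(\cdot,t(s))$ is a root of $f(\cdot,t(s))$; hence (\ref{eq:ij}) implies (\ref{eq:twistLocus}). For the reverse inclusion $\subseteq$, given a real solution of (\ref{eq:twistLocus}), the factorization $f = f_1\cdots f_l$ implies each $z_k$ ($k=1,2$) is a root of some factor $f_{i_k}(\cdot,t(s))$. I would then split into two cases: if $i_1\neq i_2$, take $\{i,j\}=\{i_1,i_2\}$ ordered so $i<j$; if $i_1=i_2$, pair that index with any other $j\in\{1,\dots,l\}\setminus\{i_1\}$, using that $l\geq 2$. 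In either case, since $f_{ij}=f_if_j$ vanishes whenever either factor does, both $z_1$ and $z_2$ are roots of $f_{ij}(\cdot,t(s))$, so the tuple solves (\ref{eq:ij}).

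The only subtlety — and the reason the lemma is stated with the index range $i<j$ rather than requiring diagonal pairs $(i,i)$ — is the case $i_1=i_2$, where both $z_1$ and $z_2$ happen to lie on the same irreducible component $\{f_{i_1}=0\}$; here one must artificially pair with another factor to land in one of the listed systems. Once this observation is made, the proof is essentially immediate, and no analytic or geometric input beyond divisibility is needed.
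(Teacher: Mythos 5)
Your proof is correct and follows essentially the same route as the paper's: the $g$-equations are redundant over the reals by conjugation of coefficients, one inclusion follows from $f_{ij}\mid f$, and the other from distributing the two roots $z_1,z_2$ among the factors of $f$. You are in fact slightly more careful than the paper, whose ``without loss of generality $i\leq j$'' silently allows $i=j$ even though $f_{ii}$ is not among the listed systems; your fix of pairing that index with an arbitrary second factor (valid since $l\geq 2$) is exactly what is needed to close that gap.
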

\begin{proof}
The real solutions of equations (\ref{eq:ij}) are real solutions of equation (\ref{eq:twistLocus}). Conversely, suppose $(s, x, y_1, y_2)$ is a solution of equations (\ref{eq:twistLocus}). Then there exists $i$ and $j$ such that
$$f_{i}\left(x+\sqrt{-1}y_1,(1-s)\delta'+s\delta''\right)= f_{j}\left(x+\sqrt{-1}y_2,(1-s)\delta'+s\delta''\right)=0$$
for some $1\leq i, j\leq l$. Without loss of generality, we assume that $i\leq j$. Thus, $(s, x, y_1, y_2)$ is a solution to the equations in equations (\ref{eq:ij}) involving $f_{ij}$. Notice that when assuming $s, x, y_1, y_2$ are real, the equations in (\ref{eq:ij}) involving $g_{ij}$ become redundant. Thus, the lemma follows. 
\end{proof}

\pagebreak

\begin{example}\label{ex:hyper46}
In the following example we use Algorithm \ref{algo:braidHyper}.
This example is a hyperplane arrangement given by fourteen lines. These lines are defined by the following equations: 

$$
[w_0,w_1,w_2]  \left[\begin{array}{cccccccccccccc}
1 & 0 & 1 & 0 & 1 & 1 & 2 & 3 & 2 & 3 & 1 & 1 & -1 & 0\\
0 & 1 & 0 & 1 & 1 & 2 & 1 & 2 & 3 & 1 & 3 & -1 & 1 & 0\\
0 & 0 & 1 & 1 & 2 & 2 & 2 & 4 & 4 & 4 & 4 & 4 & 4 & 1
\end{array}\right]=0.
$$
This hyperplane arrangement has 46 branch points, which together have over $2568$ crossings. 
Two of the the 46 generators are below. 

$$
\begin{array}{c} 
{\bf b}_1=g\sigma_{5}  \sigma_{5}g^{-1} \text{ with }\\
\begin{array}{lll}
g&:= &\big(\sigma_{3}  \sigma_{4}  \sigma_{3}  \sigma_{12}^{-1}  \sigma_{11}^{-1}  \sigma_{12}^{-1}  \sigma_{2}  \sigma_{10}  \sigma_{9} 
 \sigma_{8}  \sigma_{1}  \sigma_{13}  \sigma_{7}  \sigma_{6}  \sigma_{5}  \sigma_{12}  \sigma_{4}  \sigma_{11}  \sigma_{10}
  \sigma_{3}  \sigma_{2}  \sigma_{9}  \sigma_{11}  \sigma_{3}  \sigma_{8} 
 \\
 &&
 \sigma_{7}  \sigma_{6}  \sigma_{5}  \sigma_{4}  \sigma_{3}  \sigma_{5}  \sigma_{8}  \sigma_{7}  \sigma_{8}  \sigma_{6}  \sigma_{7}  \sigma_{8}  \sigma_{10}  \sigma_{9} 
 \sigma_{10}  \sigma_{12}  \sigma_{11}  \sigma_{12}  \sigma_{8}^{-1}  \sigma_{1}^{-1}  \sigma_{2}^{-1}  \sigma_{1}^{-1}  \sigma_{7}^{-1}  \sigma_{9}^{-1}   \big)\\
\end{array}
\\
\hline
\begin{tikzpicture} 
\braid[number of strands=14,height=.05in,width=.35in,
       style strands={}{draw=none}] (braid)  
a_{9}  a_{7}  a_{1}  a_{2}  a_{1}  a_{8}  a_{12}^{-1}  a_{11}^{-1}  a_{12}^{-1}  a_{10}^{-1}  a_{9}^{-1}         a_{10}^{-1}  a_{8}^{-1}  a_{7}^{-1}  a_{6}^{-1}  a_{8}^{-1}  a_{7}^{-1}  a_{8}^{-1}  a_{5}^{-1}  a_{3}^{-1}  a_{4}^{-1}  a_{5}^{-1}  a_{6}^{-1}  a_{7}^{-1}  a_{8}^{-1}  a_{3}^{-1}  a_{11}^{-1}  a_{9}^{-1}  a_{2}^{-1}  a_{3}^{-1}  a_{10}^{-1}  a_{11}^{-1}  a_{4}^{-1}  a_{12}^{-1}  a_{5}^{-1}  a_{6}^{-1}  a_{7}^{-1}  a_{13}^{-1}  a_{1}^{-1}  a_{8}^{-1}  a_{9}^{-1}  a_{10}^{-1}  a_{2}^{-1}  a_{12}  a_{11}  a_{12}  a_{3}^{-1}  a_{4}^{-1}  a_{3}^{-1}  a_{5}  a_{5}  a_{3}  a_{4}  a_{3}  a_{12}^{-1}  a_{11}^{-1}  a_{12}^{-1}  a_{2}  a_{10}  a_{9}  a_{8}  a_{1}  a_{13}  a_{7}  a_{6}  a_{5}  a_{12}  a_{4}  a_{11}  a_{10}  a_{3}  a_{2}  a_{9}  a_{11}  a_{3}  a_{8}  a_{7}  a_{6}  a_{5}  a_{4}  a_{3}  a_{5}  a_{8}  a_{7}  a_{8}  a_{6}  a_{7}  a_{8}  a_{10}  a_{9}  a_{10}  a_{12}  a_{11}  a_{12}  a_{8}^{-1}  a_{1}^{-1}  a_{2}^{-1}  a_{1}^{-1}  a_{7}^{-1}  a_{9}^{-1}   ; 
\node [at=(braid-1-e),pin=south  west :   1]  {};
\node [at=(braid-2-e),pin=south  west :   2]  {};
\node [at=(braid-3-e),pin=south  west :   3]  {};
\node [at=(braid-4-e),pin=south  west :   4]  {};
\node [at=(braid-5-e),pin=south  west :   5]  {};
\node [at=(braid-6-e),pin=south  west :   6]  {};
\node [at=(braid-7-e),pin=south  west :   7]  {};
\node [at=(braid-8-e),pin=south  west :   8]  {};
\node [at=(braid-9-e),pin=south  west :   9]  {};
\node [at=(braid-10-e),pin=south  west :   10]  {};
\node [at=(braid-11-e),pin=south  west :   11]  {};
\node [at=(braid-12-e),pin=south  west :   12]  {};
\node [at=(braid-13-e),pin=south  west :   13]  {};
\node [at=(braid-14-e),pin=south  west :   14]  {};
\end{tikzpicture} \\
\end{array}
$$

$$
\begin{array}{c} 
{\bf b}_2=\left(\sigma_{9}  \sigma_{10}\right)^{-1} \sigma_{9}  \sigma_{10}  \sigma_{9}  \sigma_{10}  \sigma_{9}  \sigma_{10}  \left(\sigma_{9}  \sigma_{10}  \right) \\
\hline
\begin{tikzpicture} 
\braid[number of strands=14,height=.1in,width=.35in,
       style strands={}{draw=none}] (braid)  
a_{10}^{-1}  a_{9}^{-1}  a_{9}  a_{10}  a_{9}  a_{10}  a_{9}  a_{10}  a_{9}  a_{10}   ; 
  ; 
\node [at=(braid-1-e),pin=south  west :   1]  {};
\node [at=(braid-2-e),pin=south  west :   2]  {};
\node [at=(braid-3-e),pin=south  west :   3]  {};
\node [at=(braid-4-e),pin=south  west :   4]  {};
\node [at=(braid-5-e),pin=south  west :   5]  {};
\node [at=(braid-6-e),pin=south  west :   6]  {};
\node [at=(braid-7-e),pin=south  west :   7]  {};
\node [at=(braid-8-e),pin=south  west :   8]  {};
\node [at=(braid-9-e),pin=south  west :   9]  {};
\node [at=(braid-10-e),pin=south  west :   10]  {};
\node [at=(braid-11-e),pin=south  west :   11]  {};
\node [at=(braid-12-e),pin=south  west :   12]  {};
\node [at=(braid-13-e),pin=south  west :   13]  {};
\node [at=(braid-14-e),pin=south  west :   14]  {};
\end{tikzpicture} 
\end{array}
$$

\end{example}

\section{Braid groups of polynomial systems}\label{sec:poly}
In this section we consider braid groups of polynomial systems that define a variety in $
\mathbb{P}^m\times\mathbb{P}^n$. We first consider the case where $m=1$.

Let $\hat X$ denote a hypersurface in $\mathbb{P}^1_z\times\mathbb{P}_u^m$ such that each irreducible component of the hypersurface has a dominant projection to $\mathbb{P}_u^m$. 
Let $X$ be the restriction of $\hat X$ to a product of general affine charts $\mathbb{C}_z^1\times\mathbb{C}_u^m$. 
Then, $X$ is a hypersurface and suppose this hypersurface is defined by the polynomial $f(z,u)=0$.

\begin{definition}
The branch locus $\branch$  of $\pi:X\to\mathbb{C}_u^m$ 
is defined to be 
\begin{equation}\left\{
u\in\mathbb{C}^m_u : \text{there exists } z\text{ such that } f(z,u)=0,\frac{\partial f}{\partial z}=0
\right\}.
\end{equation}
\end{definition}

Let $\gamma$ denote a loop in $\mathbb{P}^m_u\setminus\branch$. If this loop is a concatenation of line segments, then item \ref{item:important} of Algorithm \ref{algo:braid} 
allows us to compute a generator of the braid group. 
If we were to take a finite set of loops then we would have computed a subgroup of the braid group of the projection. If this subgroup is actually the full braid group on $\deg\pi$ strands then one would have computed the braid group. 
This method is useful because it can be run without computing the branch locus and taking sufficiently general  loops $\gamma$ in   $\mathbb{C}_u^m$.

To compute the braid group when it is a proper subgroup of the full braid group on 
$\deg\pi$ strands, one restricts $\mathbb{C}_u^m$ to a general line. 
The curve $\mathcal{C}$ induced by the inverse image $\pi^{-1}$ is a subvariety of $X$. 
If we parameterize the line by $\mathbb{C}_t^1$, 
then we may consider the curve $\mathcal{C}$ as a curve of  
$\mathbb{C}^1_z\times\mathbb{C}^1_t$.
The braid group of the projection of the  curve $\mathcal{C}$ to $\mathbb{C}_t^1$ is the braid group of $\pi:X\to\mathbb{C}_u^m$ if the line parameterized by $\mathbb{C}_t$ is general. 
This follows from Zariski's theorem \cite{Zar}. 

If $Y$ is an $n$ dimensional variety of $\mathbb{C}_y^m\times\mathbb{C}^n_u$, then we consider a projection $\mu:\mathbb{C}_y^m\to\mathbb{C}_z^1$. 
Let $X_\mu$ denote induced image of $\mu:Y\to\mathbb{C}_z^1\times\mathbb{C}_u^n$.
If each irreducible component of $X_\mu$ has a dominant projection to $\mathbb{C}_u^n$ then we have the braid group for $X_\mu$.  Also suppose that the  $\deg \pi:X_\mu\to\mathbb{C}_u^n$  equals 
$\deg \pi: Y\to\mathbb{C}_u^m$.
There are several interesting projections one may consider including coordinate projections and general projections. 
One of the advantageous of our numerical methods is that we do not need to do the symbolic computation of elimination to determine the defining equation of the hypersurface $X_\mu$.
Instead we can work with the variety $Y$ and order the fiber using a particular coordinate or linear combination of the coordinates.

\section{Conclusion}
We have provided a framework to numerically compute braid groups of curves, hyperplane arrangements, and parameterized polynomial systems. 
This is done by developing Algorithms \ref{algo:braid} and \ref{algo:braidHyper}.
Moreover, we have implemented these algorithms through Macaulay2 \cite{BMonodromy} and Bertini. 
There are many directions that future work can take. 
For example, further work should be explored in terms of certified path tracking. 
Also rather than computing the twist locus via polynomial system solving we described, one could use interval arithmetic 
to keep track of when the strands cross. 
In Section \ref{sec:hyper}, we designed a specialized algorithm for line arrangements and saw huge performance increases both theoretically and in implementation.
Further work on tailoring algorithms for  interesting families  of problems is also of interest.


\section*{Acknowledgments}
The authors thank Jordan Ellenberg for helpful conversations. 
The first author is supported by  a University of Chicago Provost Postdoctoral Scholarship.
The second author is partially supported by NSF grant DMS-1701305.

\bibliographystyle{abbrv}
\bibliography{refs_bg}

\end{document}